\documentclass[a4paper,12pt,leqno]{amsart}

\usepackage{graphics}
\usepackage{thmtools}
\usepackage{wasysym}
\usepackage{caption}
\usepackage{subcaption}

\usepackage[T1]{fontenc}    

\usepackage{amsthm}
\usepackage{amsbsy,amsmath,amssymb,amscd,amsfonts}
\usepackage[pagebackref=true]{hyperref}
\usepackage{url}            
\usepackage{booktabs}       
\usepackage{nicefrac}       
\usepackage{microtype}      

\usepackage{graphicx,float,latexsym,color}
\usepackage[font={scriptsize,it}]{caption}

\usepackage{makecell}

\usepackage[dvipsnames]{xcolor}

\newtheorem{theorem}{Theorem}

\newtheorem{corollary}{Corollary}
\newtheorem{lemma}{Lemma}
\theoremstyle{remark}

\theoremstyle{definition}

\hypersetup{
    pdftoolbar=true,        
    pdfmenubar=true,        
    pdffitwindow=false,     
    pdfstartview={FitH},    
    colorlinks=true,       
    linkcolor=OliveGreen,          
    citecolor=blue,        
    filecolor=black,      
    urlcolor=red           
}

\usepackage{lineno}

\arraycolsep=2pt
\captionsetup{width=120mm}

\usepackage{comment}
\usepackage{microtype}

%


\title{Lemoine point and the inscribed conic
}
\author{Liliana Gabriela  Gheorghe}
\begin{document}
\maketitle

\textbf{Abstract.} 
\small{The center of an inscribed conic  which have a given perspector is the complement of its isotomic conjugate. We provide  a synthetic  proof, based on fine proprieties of Lemoine point.
}

   \begin{figure}
\centering
\includegraphics[trim=200 400 200 300,clip,width=0.9\textwidth]{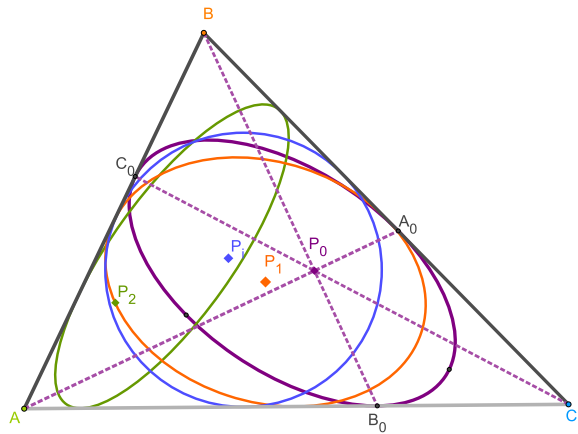}
\caption{In any triangle, there are infinitely many inscribed conics: for each point $P$ there exists a unique conic that
tangents the triangle's sides precisely at the feet of cevians through $P,$ conic's perspector. What about its center? } 
\label{fig:isca}
\end{figure}

\section{Introduction}

When a conic  touches the  sides of a triangle, 
 the lines $AA',BB',CC'$ joining the vertices with the tangency points
 meet at a point $P,$ the perspector. This  is nothing but a manifestation  of Brianchon's theorem, and this concurrency can be  validated by a tandem-use of Carnot and Ceva's theorems.

There exist one (and only one) inscribed conic, whose perspector coincide with its center: the Steiner conic, the i-conic centered at $G.$ 
In any other case,  the center  of the inscribed  conic is distinct from its perspector.  
An elementary geometric construction of the former  from the latter is  quite simple (see Figure \ref{fig:iscacentro} for "a proof without words").
By contrast,  to (geometrically) obtain the perspector of a conic whose center is known, is not an easy task: it requires a careful use  Brianchon's theorem.

 \vspace{0.6cm}

{\bf{Keywords:}} {Lemoine point, inscribed conic, perspector, isotomic conjugate, complement.}

{\bf 2010 Mathematics Subject Classification: 51A05, 51A30, 51M15.}

In this paper, we give a  synthetic proof for the fact that the center of a conic is the complement of its isotomic conjugate (see Figure \ref{fig:20070_iconics}). 
This simple and rigid relation between the two, make them
even: one can be straightforwardly obtained  from the  other.
 
 We  unveil the  relation between the perspector and the center of a conic, by  connecting them  to Lemoine point and the orthic conic.
 The elementary path we adopt allows to give a fully elementary proof of the Grinberg-Yiu theorems, listed as 
Theorem 2.1 and Theorem 2.4 in [MM1].

\subsection*{Main results} We give a geometric proof for the following  facts.

  \begin{figure}
\centering
\includegraphics[trim=50 90 20 30,clip,width=0.9\textwidth]{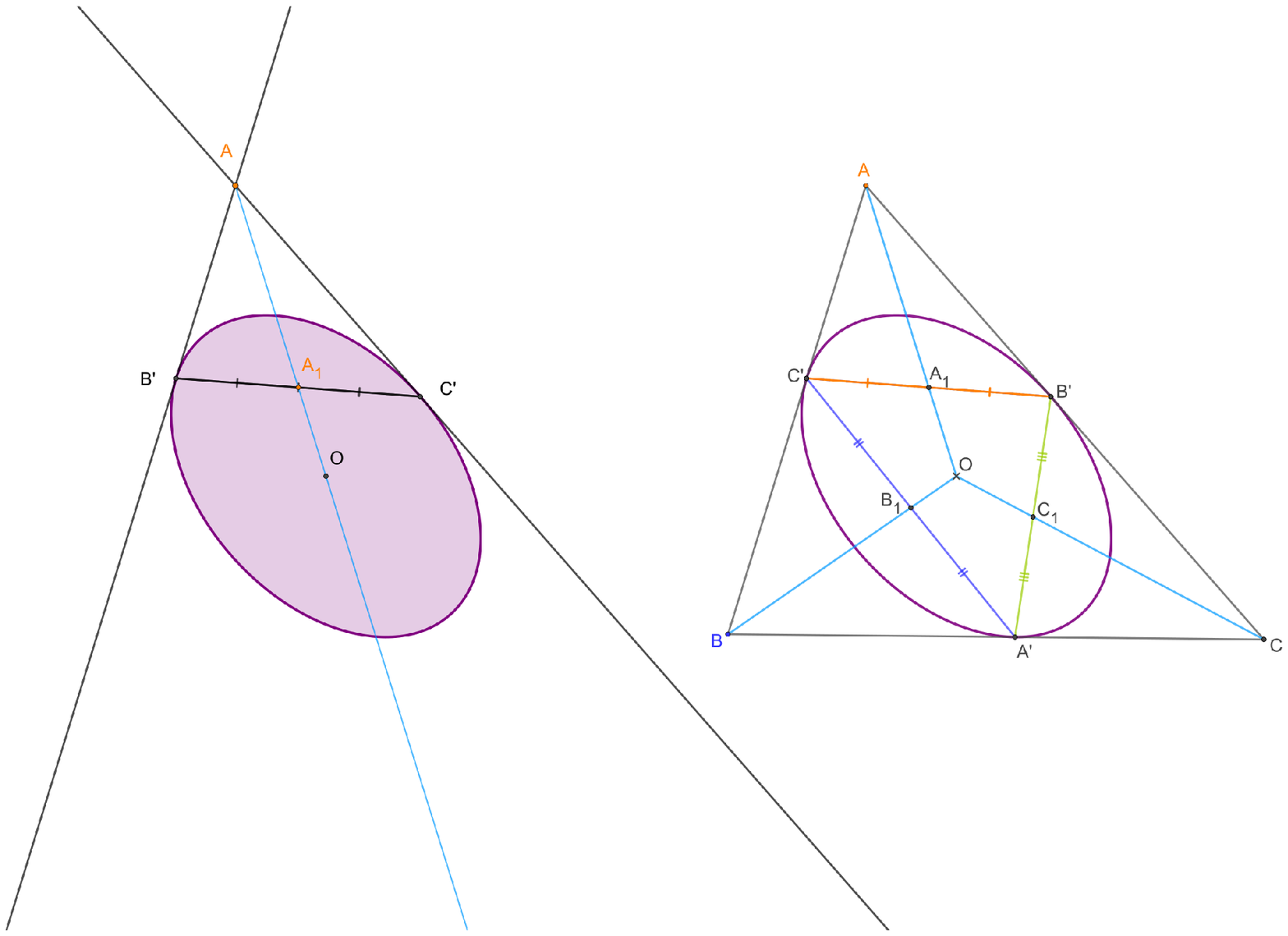}
\caption{{\bf{Left}} The line that join the midpoint of the chord of a conic, with its pole, pass through the center of the conic.  {\bf{Right}} If $A',B',C'$ are the tangency points of an inscribed conic into $\triangle{ABC}$, then the lines that join the vertices  $A$, $B$, and $C$,  to the midpoints of $\triangle{A'B'C'}$' sides, meet at the center of the conic.} 
\label{fig:iscacentro}
\end{figure} 

   \begin{figure}
\centering
\includegraphics[trim=250 500 150 300,clip,width=0.9\textwidth]{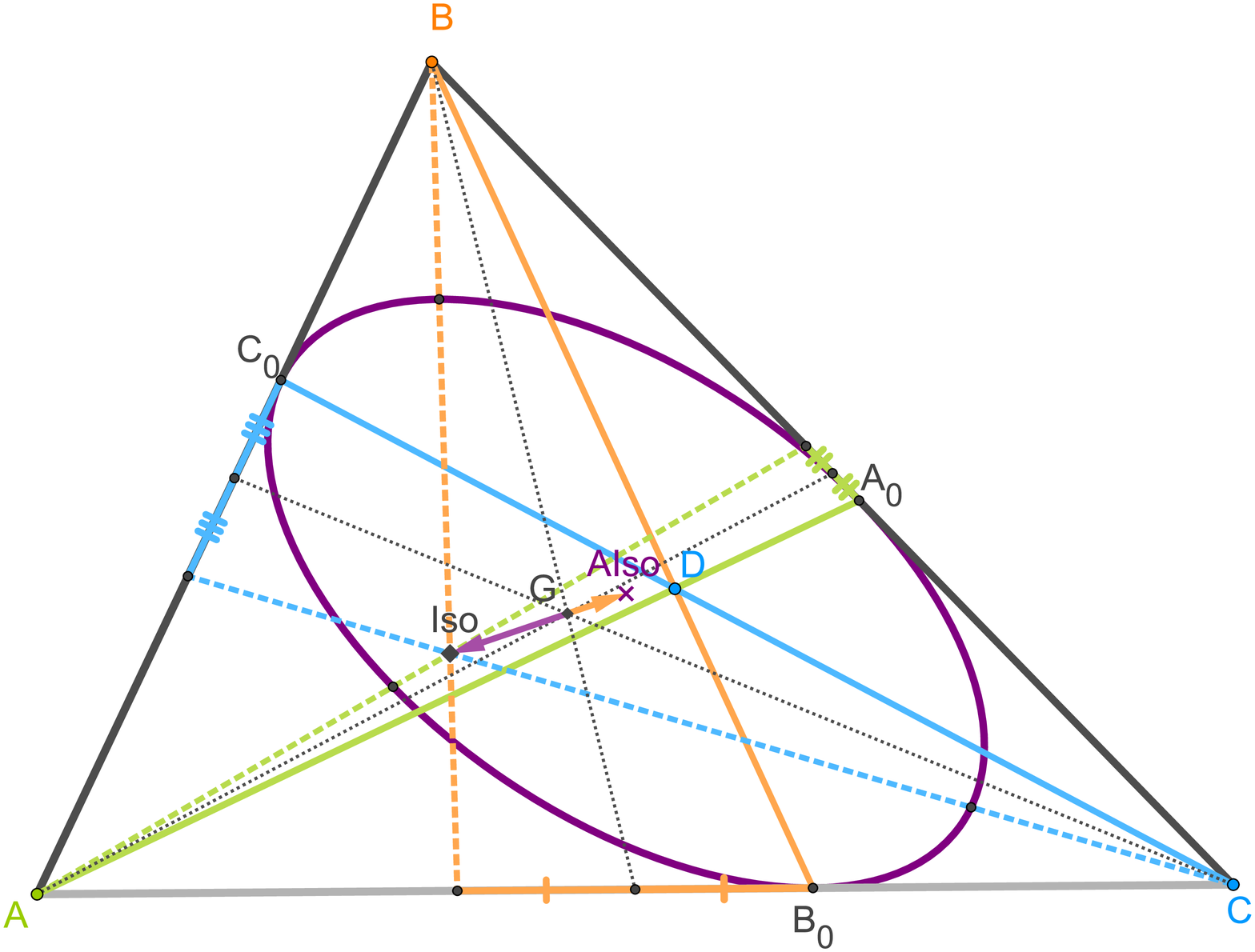}
\caption{I) If $D$ is a point and $A_0$,$B_0$,$C_0$ the feet of cevians through $D$,  the isotomic conjugate of $D$, here noted by $Iso$, is the intersection of cevians through the reflections of $A_0$, $B_O$, $C_O$ into the midpoints of the sides. II) The complement of Iso is the point $A Iso$ such that 
$\vec{G A Iso}=-1/2\vec{G Iso}$.  III) The center of an inscribed conic is the complement of the isotomic conjugate of its perspector.} 
\label{fig:20070_iconics}
\end{figure}

\begin{theorem}
The orthic conic is centered at Lemoine point.
\label{thm:ortic}
\end{theorem}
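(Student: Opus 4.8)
The plan is to identify the orthic conic explicitly and then locate its center one cevian at a time, steering the argument straight into the characteristic property of the Lemoine point. First I would fix notation: let $A'$, $B'$, $C'$ denote the feet of the altitudes from $A$, $B$, $C$, so that $A'\in BC$, $B'\in CA$, $C'\in AB$. The altitudes $AA'$, $BB'$, $CC'$ concur at the orthocenter $H$, so the orthic conic is exactly the inscribed conic whose contact triangle is the orthic triangle $A'B'C'$ and whose perspector is $H$.

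To find its center I would use the collinearity of the center, the pole of a chord, and the midpoint of that chord (the left panel of Figure~\ref{fig:iscacentro}). The chord of contact from the vertex $A$ is the side $B'C'$ of the orthic triangle, since the tangents $AB$ and $AC$ touch the conic at $C'$ and $B'$; hence the pole of the line $B'C'$ is $A$. It follows that the center lies on the line joining $A$ to the midpoint $M_A$ of $B'C'$, and, cyclically, on $BM_B$ and $CM_C$, where $M_B$, $M_C$ are the midpoints of $C'A'$ and $A'B'$. Thus the center is the common point of the three cevians from the vertices to the midpoints of the sides of the orthic triangle.

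The crux is to recognize these cevians as the symmedians. Since $\angle BB'C=\angle BC'C=90^\circ$, the points $B$, $C$, $B'$, $C'$ lie on the circle with diameter $BC$, so the chord $B'C'$ is antiparallel to $BC$ with respect to the angle at $A$. The Lemoine point's defining property---that the $A$-symmedian, being the reflection of the $A$-median in the internal bisector of angle $A$, bisects every segment antiparallel to $BC$ with endpoints on the lines $AB$ and $AC$---now places $M_A$ on the $A$-symmedian, so $AM_A$ is that symmedian. Running this cyclically identifies $AM_A$, $BM_B$, $CM_C$ with the three symmedians, whose point of concurrence is the Lemoine point $K$; hence the center of the orthic conic is $K$.

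The main obstacle I foresee is not the concurrency but supplying, in a self-contained synthetic manner, the antiparallel-bisecting property of the symmedian, since it is precisely the fine property of the Lemoine point on which everything hinges; I would derive it from the reflection definition together with a short angle chase on the cyclic quadrilateral $BCB'C'$. As a coordinate sanity check consistent with the main result announced in the introduction, the isotomic conjugate of $H=(\tan A:\tan B:\tan C)$ is $(\cot A:\cot B:\cot C)=(b^2+c^2-a^2:c^2+a^2-b^2:a^2+b^2-c^2)$, whose complement $(y+z:z+x:x+y)$ equals $(a^2:b^2:c^2)=K$.
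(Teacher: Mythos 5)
Your proposal is correct and follows essentially the same route as the paper: it combines the pole--midpoint--center collinearity (the paper's Lemma~\ref{lemma:polo_tang}) with the fact that the lines from the vertices to the midpoints of the orthic triangle's sides are the symmedians because those sides are antiparallel to the sides of $\triangle{ABC}$ (the paper's Lemma~\ref{lemma:simedian_mid}). Your added justification of the antiparallelism via the cyclic quadrilateral $BCB'C'$ is a welcome extra detail, but the overall decomposition is identical.
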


\begin{theorem} Lemoine point 
is the complement of the isotomic conjugate of the orthocenter.
\label{thm:anticomplement}
\end{theorem}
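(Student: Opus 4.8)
The plan is to verify the claimed identity in barycentric coordinates, where every ingredient — the Lemoine point, the orthocenter, the isotomic conjugate, and the complement — has a transparent closed form, and then to indicate how the same conclusion can be read off synthetically from Theorem \ref{thm:ortic}. First I would record the coordinates. Writing $S_A=\tfrac12(b^2+c^2-a^2)$, $S_B=\tfrac12(c^2+a^2-b^2)$, $S_C=\tfrac12(a^2+b^2-c^2)$ for the Conway symbols, the Lemoine point is $K=(a^2:b^2:c^2)$ and the orthocenter is $H=(S_BS_C:S_CS_A:S_AS_B)$, i.e. $H=\left(\tfrac1{S_A}:\tfrac1{S_B}:\tfrac1{S_C}\right)$.

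Second, I would take the isotomic conjugate. Since isotomic conjugation sends $(x:y:z)$ to $(1/x:1/y:1/z)$ (reflecting each cevian foot in the midpoint of the corresponding side), applying it to $H$ and clearing the common factor $S_AS_BS_C$ gives $H^{t}=(S_A:S_B:S_C)$. Third, I would apply the complement. With $G$ the centroid, the complement is the homothety $\vec{GP^{c}}=-\tfrac12\vec{GP}$ of Figure \ref{fig:20070_iconics}, which in normalized barycentrics reads $(x:y:z)\mapsto(y+z:z+x:x+y)$; one checks this directly from $P^{c}=\tfrac32G-\tfrac12P$ using $x+y+z=1$. The whole proof then collapses to the single Conway identity
\[
S_B+S_C=a^2,\qquad S_C+S_A=b^2,\qquad S_A+S_B=c^2,
\]
so that the complement of $H^{t}=(S_A:S_B:S_C)$ is exactly $(a^2:b^2:c^2)=K$.

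I expect no real obstacle along this route: once the coordinate formulas are in hand the argument is a one-line identity, and the only care needed is to confirm that the vector definition of the complement in Figure \ref{fig:20070_iconics} agrees with the barycentric formula $(y+z:z+x:x+y)$. The genuine difficulty, and the reason the statement deserves a theorem in this paper, is to obtain it \emph{synthetically}, in the spirit the author adopts. Here the natural idea is to lean on Theorem \ref{thm:ortic}: the orthic inconic is tangent to the sides at the feet of the altitudes, which are precisely the cevian feet of $H$, so its perspector is $H$; combining the relation ``center $=$ complement of the isotomic conjugate of the perspector'' with Theorem \ref{thm:ortic} (center $=K$) yields $K=(H^{t})^{c}$ at once.

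The one thing to watch is circularity. This synthetic shortcut is legitimate only if the general center--perspector relation is established independently of Theorem \ref{thm:anticomplement}; if instead that relation is the headline result being built up to, then invoking it here would beg the question, and the self-contained coordinate computation above is the safe proof. My expectation is therefore to present the barycentric identity as the rigorous backbone, while flagging the orthic-inconic argument as the conceptual reason the equality $K=(H^{t})^{c}$ holds — the former for certainty, the latter for the synthetic insight the paper is after.
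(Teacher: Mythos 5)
Your proposal is correct, but it takes a genuinely different route from the paper. Your backbone is a barycentric computation: $H=(1/S_A:1/S_B:1/S_C)$, so its isotomic conjugate is $(S_A:S_B:S_C)$, and the complement map $(x:y:z)\mapsto(y+z:z+x:x+y)$ together with the identity $S_B+S_C=a^2$ (and its cyclic mates) gives $(a^2:b^2:c^2)=K$; all of these formulas are standard and the verification of the complement formula from $\overrightarrow{GP^c}=-\frac12\overrightarrow{GP}$ is right. The paper instead proves the statement synthetically, as its title and introduction promise: it invokes Lemma \ref{lema:Isotomico} (the segment joining the midpoint $A_m$ of a side to the midpoint $A_m'$ of the corresponding altitude passes through $K$), constructs the isotomic conjugate $H_m$ of $H$ explicitly as the intersection of the cevians $AA_h'$, $BB_h'$, $CC_h'$ through the reflections of the altitude feet in the side midpoints, and then observes that $A_mA_m'$ is a midline of $\triangle AA_hA_h'$, so the homothety centered at $G$ with ratio $-\frac12$ carries $A\mapsto A_m$ and $A_h'\mapsto A_m'$, hence carries $H_m$ to $K$. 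Your coordinate proof buys certainty and brevity at the cost of the synthetic character the paper is explicitly after; the paper's proof buys the geometric mechanism (a visible $-\frac12$ homothety at $G$) at the cost of relying on the nontrivial Lemma \ref{lema:Isotomico}, which is only cited to [H]. Your instinct about circularity in the alternative shortcut is exactly right and worth keeping: in the paper, Theorem \ref{thm:perspector} is deduced from Theorems \ref{thm:ortic} and \ref{thm:anticomplement} via an affine reduction to the orthic conic, so deriving Theorem \ref{thm:anticomplement} from Theorem \ref{thm:perspector} would beg the question; your self-contained computation avoids that trap.
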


\begin{theorem}
The center of an inscribed conic   is the complement of the isotomic conjugate of its perspector.
\label{thm:perspector}
\end{theorem}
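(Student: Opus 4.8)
The plan is to derive the statement directly from the two facts displayed in Figure~\ref{fig:iscacentro}, using Theorems~\ref{thm:ortic} and~\ref{thm:anticomplement} only to calibrate and check the special case $P=H$. Write $\mathcal{C}$ for the inscribed conic with perspector $P=(u:v:w)$ and let $A',B',C'$ be its contact points on $BC,CA,AB$; these are precisely the feet of the cevians through $P$, so $A'=(0:v:w)$, $B'=(u:0:w)$, $C'=(u:v:0)$. The orthic conic is the case $P=H$, and by Theorems~\ref{thm:ortic} and~\ref{thm:anticomplement} its center is the Lemoine point $K=c(\mathrm{Iso}(H))$; the task is to promote this single evaluation to an arbitrary perspector.

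First I would locate the center $O$ of $\mathcal{C}$ by pole--polar duality. Since $\mathcal{C}$ touches $CA$ at $B'$ and $AB$ at $C'$, the tangents at the two ends of the chord $B'C'$ are these sidelines, so the pole of $B'C'$ is their meet $A$. The center $O$ is the pole of the line at infinity $\ell_\infty$, hence the line $OA$ joins the poles of $\ell_\infty$ and of $B'C'$ and is therefore the polar of the point $\ell_\infty\cap B'C'$, i.e.\ the diameter conjugate to the direction of $B'C'$. That diameter bisects every chord parallel to $B'C'$, in particular $B'C'$ itself, so it passes through the midpoint $M_A$ of $B'C'$. Thus $O\in AM_A$, and cyclically $O\in BM_B$, $O\in CM_C$ with $M_B,M_C$ the midpoints of $C'A',A'B'$. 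This reproves the concurrency of Figure~\ref{fig:iscacentro} and yields $O=AM_A\cap BM_B\cap CM_C$.

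It remains to identify this common point with $c(\mathrm{Iso}(P))$. Computing the midpoint of the cevian-triangle side gives $M_A=\big(u(2u+v+w):v(u+w):w(u+v)\big)$, so $AM_A$ is the line $w(u+v)\,Y=v(u+w)\,Z$. Since $\mathrm{Iso}(P)=(vw:wu:uv)$, its complement is
\[
c(\mathrm{Iso}(P))=\big(u(v+w):v(w+u):w(u+v)\big),
\]
and a one-line substitution shows this point lies on $w(u+v)Y=v(u+w)Z$ and, by symmetry, on the other two lines; hence it equals $O$. Specialising to $P=H$ returns $O=K$, in agreement with Theorems~\ref{thm:ortic} and~\ref{thm:anticomplement}, a reassuring consistency check.

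The genuine obstacle is this last identification carried out \emph{synthetically} rather than by the short barycentric verification: one wants a coordinate-free reason why the cevian from $A$ to the midpoint of $B'C'$ must pass through the complement of the isotomic conjugate of $P$. The tempting shortcut of transporting the known orthic case to a general $P$ by the unique collineation that fixes $A,B,C$ and sends $H\mapsto P$ does carry $\mathcal{C}_{H}$ to $\mathcal{C}$ and perspector to perspector, but it is only projective, whereas center, centroid, midpoints and complement are affine; such a map sends the center of one conic to the pole of the \emph{moved} line at infinity, not to the center of the image, so the affine data must be restored by a further pole--polar argument. Because that detour is less transparent than the contact-triangle computation, I would build the proof on the midpoint construction and call on Theorems~\ref{thm:ortic} and~\ref{thm:anticomplement} only to anchor and verify it.
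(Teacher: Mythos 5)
Your argument is correct, but it takes a genuinely different route from the paper. You prove the theorem directly in barycentric coordinates: the pole--polar argument showing that the center lies on $AM_A$ (which is the paper's Lemma \ref{lemma:polo_tang}, reproved via conjugate diameters instead of an affine reduction to a circle), followed by the explicit computation that $c(\mathrm{Iso}(P))=\bigl(u(v+w):v(w+u):w(u+v)\bigr)$ lies on the three lines $AM_A$, $BM_B$, $CM_C$. That computation checks out, and it makes Theorems \ref{thm:ortic} and \ref{thm:anticomplement} logically superfluous --- they become a consistency check rather than an ingredient. The paper does the opposite: it keeps the orthic case as the engine and reduces the general perspector to it. Concretely, it reflects $\triangle ABC$ in the center $\Omega$ of the conic, obtains a centrally symmetric hexagon circumscribing the conic and inscribed in a circumellipse $\Gamma$ concentric with it, and applies the affine map sending $\Gamma$ to a circle; by Lemma \ref{lemma:ortic_hexagon} that circle is the Lemoine circle of the image triangle, so the image conic is its orthic conic, whose perspector is the orthocenter. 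Since this map is affine (it does \emph{not} fix $A,B,C$), it preserves midpoints, the centroid, isotomic conjugacy and complementation, and the general statement is pulled back from Theorems \ref{thm:ortic} and \ref{thm:anticomplement}. This sidesteps exactly the obstruction you flag in your last paragraph: the problem with the collineation fixing $A,B,C$ and sending $H$ to $P$ is that it is merely projective, whereas the paper's map is constructed to be affine from the start. What each approach buys: yours is shorter, self-contained and easily verified, but it is a coordinate proof, which is what the paper explicitly sets out to avoid; the paper's is synthetic and explains the statement as an affine shadow of the Lemoine-point geometry, at the cost of the hexagon construction and the two preliminary theorems.
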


\subsection*{Related work} Inscribed conics, in their classic setting appear in almost any classic book on conics;  see [AZ],[C],[G], [GSO].
Inscribed conics in  Poncelet pairs,  are in  [RG],[GRK],[G1].
Inscribed conic in trilinear coordinates, profoundly related both to  isotomic or isogonal transform of lines are in the recent paper [A].
 A sophisticated  study of the relation between perspector, orthocenter and the center of a conic, that  uses a mix of
projective and afine techniques is in  [MM1] and  [MM2].

 \section{Under-exploited proprieties of Lemoine point}

  Lemoine geometry is nowadays  a consolidate   chapter in triangle's geometry and 
  the point itself  is 
still of interest  (see  [L],[M] or the more recent  [P] and the references therein).
Curiously enough, the connection between geometric proprieties of Lemoine point and inscribed conics or hexagons is scarcely exploit.
Here, we  use several  proprieties of the Lemoine point, that reconnect it to its natural environment: hexagons with parallel sides and Lemoine circle

 For the convenience of the reader, we begin by recalling some useful observations on Lemoine point, all  of them classic .

\begin{lemma}
 In any triangle, the lines that join the  vertices to the midpoints of its orthic  meets at Lemoine's point.  
 \label{lemma:simedian_mid} 
\end{lemma}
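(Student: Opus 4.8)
The plan is to show that each of the three lines in the statement is a symmedian of $\triangle ABC$; the concurrency then needs no separate argument, since the Lemoine point is by definition the common point of the three symmedians. Write $H_A, H_B, H_C$ for the feet of the altitudes from $A, B, C$, so the orthic triangle is $\triangle H_A H_B H_C$, and let $M_A$ be the midpoint of the side $H_B H_C$ (the side cut off near $A$), with $M_B, M_C$ defined analogously. The statement pairs $A$ with $M_A$, and so on, so it suffices to prove that $A M_A$ is the $A$-symmedian; the other two follow by relabeling.

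The first ingredient is the classical fact that the sides of the orthic triangle are antiparallel to the sides of $\triangle ABC$; concretely, $H_B H_C$ is antiparallel to $BC$ with respect to angle $A$. I would obtain this from concyclicity: since $H_B$ and $H_C$ are feet of altitudes, $\angle B H_B C = \angle B H_C C = 90^\circ$, so $B, C, H_B, H_C$ lie on the circle with diameter $BC$; reading off the inscribed angles in this cyclic quadrilateral gives $\angle A H_C H_B = \angle A C B$, which is exactly the antiparallel condition.

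The second ingredient is the equally classical description of the symmedian as the line bisecting all antiparallels to $BC$. For a segment $PQ$ antiparallel to $BC$ with $P \in AB$ and $Q \in AC$, the antiparallel condition yields $\triangle APQ \sim \triangle ACB$ in the correspondence $A \mapsto A$, $P \mapsto C$, $Q \mapsto B$. This similarity swaps the two rays $AB$ and $AC$, hence is orientation-reversing: up to a scaling it is the reflection across the bisector of angle $A$. It therefore carries the $A$-median (the cevian from $A$ to $\mathrm{mid}(CB)$) to the cevian from $A$ to $\mathrm{mid}(PQ)$, exhibiting the latter as the reflection of the median across the angle bisector, i.e.\ as the $A$-symmedian.

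Combining the two ingredients: $H_B H_C$ is an antiparallel to $BC$, so its midpoint $M_A$ lies on the $A$-symmedian, whence $A M_A$ is the $A$-symmedian; by symmetry $B M_B$ and $C M_C$ are the $B$- and $C$-symmedians, which meet at the Lemoine point. I do not expect a genuine obstacle, as both ingredients are standard; the only point demanding care is the bookkeeping, namely pairing each vertex with the \emph{opposite} orthic side $H_B H_C$ rather than with $H_A$, and keeping the correspondence $\triangle APQ \sim \triangle ACB$ (not $\triangle ABC$) straight, since it is precisely the ray-swap in that similarity that produces the reflection, and thus the symmedian rather than the median.
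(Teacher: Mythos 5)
Your proof is correct and follows essentially the same route as the paper: identify each line $AM_A$ as a symmedian via the antiparallelism of the orthic sides and the fact that the symmedian bisects all antiparallels to the opposite side. The only difference is that the paper simply cites these two classical ingredients, whereas you supply their proofs (concyclicity on the circle with diameter $BC$, and the orientation-reversing similarity $\triangle APQ \sim \triangle ACB$), both of which are handled correctly.
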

 \begin{proof} Let $A_{m}$, $B_{m}$, and  $C_{m}$ the midpoints of the orthic triangle (see Figure \ref{fig:ortico}).
 The sides of the orthic  are anti-parallel to the sides of $\triangle{ABC}.$ Since the locus of the midpoints of the anti-parallels at a triangle side is the symmedian that corresponds to the referred side, the lines $AA_{m}$, $BB_{m}$, $C_{m}$,  are therefore
 $\triangle{ABC}$'s
 symmedians,  meeting at Lemoine point. 
  \end{proof}
The next result is also classic. 
\begin{lemma}
Let $BC$ be a chord of a conic centered in $O$ and $M$ be its midpoint. If the tangents to the conic at $B$ and $C$ meet at a point $A,$ then 
$AM$ pass through the center of the conic.
\label{lemma:polo_tang}
\end{lemma}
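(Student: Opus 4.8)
The plan is to read the whole picture through the polarity attached to the conic and to close with La Hire's theorem. The first step is to recognize what $A$ is projectively: because the tangents at $B$ and $C$ meet at $A$, the point $A$ is exactly the pole of the line $BC$; equivalently, $BC$ is the polar of $A$. This is just the projective rephrasing of ``intersection of the tangents at the endpoints of a chord'', and it costs nothing.

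The second step brings the center $O$ into the configuration via the theory of diameters. Let $U$ be the point at infinity of the direction of $BC$. For a central conic the polar of $U$ is the diameter that bisects every chord parallel to $BC$; in particular this diameter passes through $O$ and through the midpoint $M$ of the chord $BC$. Thus I will have identified the polar of $U$ with the line $OM$ itself.

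The third step is a one-line application of La Hire's reciprocity. Since $U$ lies on $BC$ and $BC$ is the polar of $A$, the point $U$ lies on the polar of $A$; reciprocity then forces $A$ to lie on the polar of $U$, which is the line $OM$. Hence $O$, $M$ and $A$ are collinear, i.e. $AM$ passes through the center, which is the assertion.

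The only genuinely delicate point is the identification made in the second step, that the polar of the infinite point $U$ is the conjugate diameter through $M$. I would justify it either by quoting the classical conjugate-diameter theorem, or by a short computation in coordinates centered at $O$: writing the conic as $X^{\mathsf T}\mathbf{S}X=1$, the membership conditions $B^{\mathsf T}\mathbf{S}B=C^{\mathsf T}\mathbf{S}C=1$ give $(B-C)^{\mathsf T}\mathbf{S}(B+C)=0$, from which one checks directly that the intersection of the two tangents is a scalar multiple of $B+C$, hence lies on the line $OM$. This computation is sign-agnostic, so it treats ellipse and hyperbola uniformly; I would only set aside the degenerate case in which $BC$ is itself a diameter, where $M=O$, the tangents at $B$ and $C$ are parallel, and the statement is vacuous. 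A metric-free alternative worth noting is the affine reduction: every ellipse is an affine image of a circle, affine maps preserve tangency, midpoints, centers and collinearity, and for a circle $OA$ is the perpendicular bisector of $BC$, so the claim is immediate there and then transported.
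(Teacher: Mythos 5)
Your argument is correct, but it runs along a genuinely different track from the paper's. The paper disposes of the lemma by a single affine reduction: map the conic to a circle, note that affine maps preserve midpoints, tangency and centers, and invoke the elementary fact that for a circle the tangent-intersection point $A$, the midpoint $M$ and the center lie on the perpendicular bisector of the chord. You instead work inside the polarity of the conic: $A$ is the pole of $BC$, the polar of the infinite point $U$ of $BC$ is the conjugate diameter $OM$, and La Hire's reciprocity ($U$ on the polar of $A$ implies $A$ on the polar of $U$) forces $A$, $O$, $M$ to be collinear. What your route buys is uniformity and metric-freeness: it treats ellipse and hyperbola identically, does not privilege the circle, and the coordinate verification $(B-C)^{\mathsf T}\mathbf{S}(B+C)=0$ makes the conjugate-diameter step airtight; you also correctly isolate the only degenerate case ($BC$ a diameter, where the tangents are parallel and the claim is vacuous). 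What the paper's route buys is brevity and consistency with the rest of the article, which leans repeatedly on affine reductions (Lemma 4 and the proof of Theorem 3 use the same device), so the circle reduction is the ``native'' argument here. Note that you in fact mention this affine alternative in your closing remark, so you have effectively reproduced the paper's proof as a footnote to your own; either version is acceptable, and no gap remains in yours.
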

\begin{proof} Refer to Figure \ref{fig:iscacentro}.
Perform an afine transform that maps the conic into a circle. Afine transform preserves the midpoint. Therefore, the original claim reduces to the following (familiar) statement: if the tangents at the endpoints of a chord
of a circle $B'C'$ 
meet at $A',$ then the line $A'M'$ that  join 
$A'$ to the midpoint of $B'C'$ pass through the circle's center.
\end{proof}

Finally,
Lemoine point and the altitudes are interlinked by a  curious   geometric relation; refer to Figure \ref{fig:Isotomico}.
  \begin{figure}
\centering
\includegraphics[trim=100 80 100 20,clip,width=0.9\textwidth]{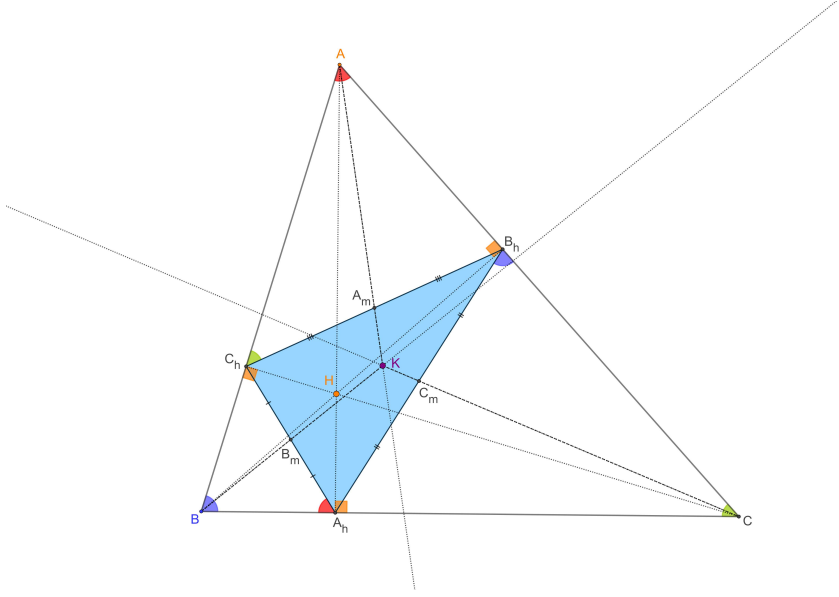}
\caption{The sides of the orthic (light blue) are antiparallel to those of $\triangle{ABC}$. The line that join
$A,B,C,$ to the midpoints $A_m,B_m,C_m$ of  the orthic triangle, meet at Lemoine point.} 
\label{fig:ortico}
\end{figure}

\begin{lemma} In any triangle, the segment that join the midpoint of one side, to the midpoint of the corresponding altitude  passes through Lemoine point.
 \label{lema:Isotomico}
 \end{lemma}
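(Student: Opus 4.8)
The plan is to fix notation and reduce the asserted collinearity to a single scalar identity. Write $M_a$ for the midpoint of $BC$, $H_a$ for the foot of the altitude from $A$, and $N_a$ for the midpoint of the altitude $AH_a$; the claim is that $M_a$, $N_a$ and the Lemoine point $K$ are collinear. I would place the origin at $M_a$ and set $\vec u=\overrightarrow{M_aC}$ (so that $\overrightarrow{M_aB}=-\vec u$ and $|\vec u|^2=a^2/4$) and $\vec w=\overrightarrow{M_aA}$. Since $H_a$ is the orthogonal projection of $A$ onto the line $BC$, which passes through the origin in the direction $\vec u$, one has $\overrightarrow{M_aH_a}=\dfrac{\vec w\cdot\vec u}{|\vec u|^2}\,\vec u$, and hence $\overrightarrow{M_aN_a}=\tfrac12\bigl(\vec w+\tfrac{\vec w\cdot\vec u}{|\vec u|^2}\,\vec u\bigr)$.

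Next I would bring in the Lemoine point through its barycentric description $K=(a^2:b^2:c^2)$. Relative to the origin $M_a$, using $A=\vec w$, $B=-\vec u$, $C=\vec u$, this reads $\overrightarrow{M_aK}=\dfrac{1}{a^2+b^2+c^2}\bigl(a^2\vec w+(c^2-b^2)\vec u\bigr)$. Both $\overrightarrow{M_aN_a}$ and $\overrightarrow{M_aK}$ are thus expressed in the fixed basis $\{\vec w,\vec u\}$, so the three points are collinear exactly when these two vectors are proportional, i.e. when the ratios of the $\vec u$--coefficient to the $\vec w$--coefficient agree:
\[
\frac{c^2-b^2}{a^2}=\frac{\vec w\cdot\vec u}{|\vec u|^2}.
\]

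The crux is therefore the scalar identity $\vec w\cdot\vec u=\overrightarrow{M_aA}\cdot\overrightarrow{M_aC}=\tfrac14(c^2-b^2)$, which I expect to be the only genuine computation. It follows at once from the parallelogram (Apollonius) relation: expanding $b^2=|\vec w-\vec u|^2=|\vec w|^2-2\,\vec w\cdot\vec u+|\vec u|^2$ and $c^2=|\vec w+\vec u|^2=|\vec w|^2+2\,\vec w\cdot\vec u+|\vec u|^2$ and subtracting gives $c^2-b^2=4\,\vec w\cdot\vec u$. Dividing by $|\vec u|^2=a^2/4$ yields the displayed equality, whence $\overrightarrow{M_aK}=\dfrac{2a^2}{a^2+b^2+c^2}\,\overrightarrow{M_aN_a}$ and the collinearity follows. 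The main obstacle is conceptual rather than computational: one must decide how to feed the Lemoine point into a synthetic argument. Here I lean on the standard characterization $K=(a^2:b^2:c^2)$, but the direction of $\overrightarrow{M_aK}$ could instead be read off Lemma~\ref{lemma:simedian_mid}, which places $K$ on the $A$--symmedian; that route avoids coordinates for $K$ at the cost of a short auxiliary argument identifying the symmedian direction in the basis $\{\vec w,\vec u\}$.
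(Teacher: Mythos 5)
Your proof is correct, and it is genuinely different from what the paper does: the paper offers no argument for this lemma at all, deferring to Honsberger [H] for a synthetic proof. Your computation checks out in every step — the projection formula for $H_a$, the identity $c^2-b^2=4\,\overrightarrow{M_aA}\cdot\overrightarrow{M_aC}$ from expanding $|\vec w\pm\vec u|^2$, and the comparison of coefficients in the basis $\{\vec w,\vec u\}$ (legitimate since $A\notin BC$ makes the basis genuinely two-dimensional, and both $\vec w$-coefficients are nonzero). The resulting relation $\overrightarrow{M_aK}=\frac{2a^2}{a^2+b^2+c^2}\overrightarrow{M_aN_a}$ is a clean quantitative strengthening of the collinearity claim. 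The one external input you rely on is the barycentric characterization $K=(a^2:b^2:c^2)$, which the paper never states; it is classical, but it sits somewhat at odds with the paper's announced goal of keeping everything synthetic, which is presumably why the author points to [H] instead of computing. Your closing remark — that one could instead extract the direction of $\overrightarrow{M_aK}$ from Lemma~\ref{lemma:simedian_mid} and the symmedian property — is the right way to reconcile the two styles; carried out, it would give a proof in the spirit of the paper without importing barycentric coordinates. As written, your argument is complete and supplies a proof where the paper supplies only a citation.
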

 For a proof refer to  [H].

\section{Proofs of the main results}
\subsection{Proof of Theorem \ref{thm:ortic}}.

\begin{proof}
The orthic conic tangents triangle's sides at the feet of the altitudes. 
By  Lemma \ref{lemma:simedian_mid}, the lines that join the triangle's vertices, to  the midpoints of the orthic sides, meets at Lemoine point. On the other hand, by Lemma  \ref{lemma:polo_tang} the same  point is (also) the center of the orthic conic,  ending the proof. 
\end{proof}

    \begin{figure}
\centering
\includegraphics[trim=100 160 160 50,clip,width=0.9\textwidth]{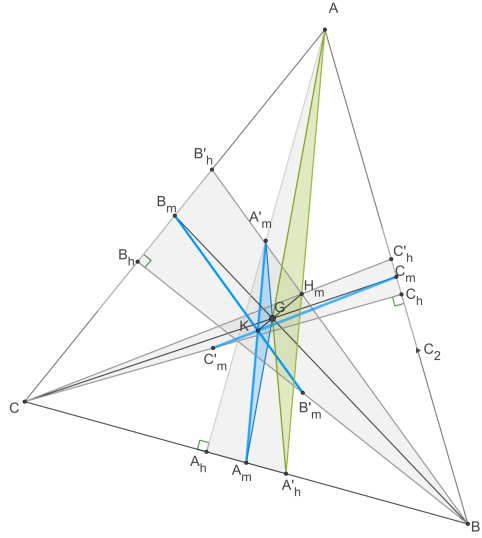}
\caption{I) The segments $A_m A'_m,B_m B'_m,C_m C'_m$ joining the midpoints of an altitude with the midpoint of the corresponding side, concur at Lemoine point.  
II) $A'_h$, $B'_h$, $C'_h$ are the reflections of the feet of the altitudes 
$A_h, B_h, C_h$ into the midpoints; 
$H_m$, the isotomic conjugate of the orthocenter
is the intersection of $A A'h, B B'_h, C C'_h$.
II) $K$ shown as  the complement of $H_m,$ the isotomic conjugate of the orthocenter. } 
\label{fig:Isotomico}
\end{figure}

\subsection{Proof of Theorem  \ref{thm:anticomplement}}.

\begin{proof} Refer to Figure ~\ref{fig:Isotomico}.
Let $H_m$ be the isotomic conjugate of the orthocenter $H.$ 
We shall to prove that
$$\overrightarrow{KG}=\frac{1}{2}\overrightarrow{GH_m},$$ where $G$ is the barycenter. Let $A_m',B_m',C_m'$ be the midpoints of the altitudes;
by Lemma ~\ref{lema:Isotomico}, the lines $A_m A_m',$ $B_m B_m'$ and 
$C_m C_m'$ meet at Lemoine point. Let $A'_h$  the reflection of $A_h,$ in $A_m,$ the midpoint of $BC;$ similarly define $B_h',C_h'.$
Then the lines 
 $A A_h',B B_h',C C_h'$ intercept in $H_m,$ the isotomic conjugate of the orthocenter $H.$
 
 Denote (temporarily) by $G'$ the intersection of $A A_m$ and $A'_m A'_h.$

Note that $$\triangle{A_m G' A_m'}\sim 
\triangle{A G' A'_h}$$ since by construction the angle in the common vertex $G'$ is the same,  and
$A_m A_m'$ is a mid-base in 
$\triangle A A_h A_h';$ hence 
\begin{equation}
\overrightarrow{G' A_m}=-\frac{1}{2}\overrightarrow{G' A},\;\;
\overrightarrow{A_m A'_m}=-\frac{1}{2}\overrightarrow{A' A_h},
\label{caso2}
\end{equation}
thus, $G'=G$.

This proves that triangles $\triangle{KGA_m}\sim\triangle{H_m GA}$, since they are inversely homothetic, of ratio $-\frac{1}{2}$ with respect to  homothety center $G.$ Hence
$$\overrightarrow{GK}=-\frac{1}{2}\overrightarrow{GH_m},$$
showing that the point $K$ itself is the complement of $H_m,$
 the isotomic conjugate of $H.$

\end{proof}

  
 Now let us consider the "Lemoine hexagon" the hexagon
 $[A_1 A_2 B_1 B_2 C_1 C_2]$ whose diagonals are the three congruent antiparallels at triangle's sides (see Figure  \ref{fig:20021_inscript}). 
 It is
 inscribed into Lemoine circle, centered at $K$, and its vertices are the intersection of $\triangle{ABC}$'s sides, with the antiparalels through $K$ at its sides.

\begin{lemma} 
 The hexagon $[A_1 A_2 B_1 B_2 C_1 C_2]$   is inscribed into Lemoine's second circle and circumscribes
 the orthic conic of $\triangle{ABC}.$
 \label{lemma:ortic_hexagon}
\end{lemma}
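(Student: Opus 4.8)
The plan is to treat the two assertions separately. For ``inscribed in Lemoine's second circle'' I would lean on the classical description of the symmedian point together with Lemma \ref{lemma:simedian_mid}: since the symmedian through each vertex is the locus of midpoints of the antiparallels to the opposite side, and $K$ lies on all three symmedians, the point $K$ bisects each of the three antiparallels drawn through it. These three antiparallels are moreover equal in length (the classical property that singles out the symmedian point and is exactly what underlies the cosine circle), so their six endpoints $A_1,A_2,B_1,B_2,C_1,C_2$ are all at the same distance from $K$ and hence lie on one circle centered at $K$, the second Lemoine (cosine) circle. In particular the three antiparallels are three diameters of this circle, so they are the three main diagonals of the hexagon and they concur at its center $K$; by the converse of Brianchon's theorem this already guarantees that the hexagon circumscribes \emph{some} conic, and the remaining task is to identify that conic.

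For the second assertion the key observation I would exploit is that each antipodal pair on the cosine circle is a pair of opposite vertices of the hexagon, so the half-turn $\sigma_K$ about $K$ carries the hexagon to itself, interchanging opposite vertices while preserving the cyclic order. First I would fix the combinatorial type: on each side of $\triangle{ABC}$ the circle cuts exactly two of the six points, and these are consecutive vertices, so three sides of the hexagon lie along $BC$, $CA$, $AB$; the remaining three sides cut the corners, the one near $A$ joining the vertex on $AB$ to the vertex on $CA$ that are precisely the $\sigma_K$-antipodes of the two points lying on $BC$. Equivalently, the corner side near $A$ is exactly $\sigma_K(\text{line }BC)$, and similarly for the corners near $B$ and $C$.

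With this in hand the tangency is immediate from Theorem \ref{thm:ortic}. The orthic conic is centered at $K$, hence invariant under $\sigma_K$; being inscribed it is tangent to $BC$, $CA$, $AB$, which are three of the hexagon's sidelines. Applying $\sigma_K$ to the tangency of the orthic conic with $BC$ at the foot $A_h$ shows that the conic is also tangent to $\sigma_K(BC)$, that is, to the corner sideline near $A$, at the point $\sigma_K(A_h)$; the same argument handles the corners near $B$ and $C$. Thus the orthic conic is tangent to all six sidelines of the hexagon, which is the assertion that the hexagon circumscribes it. Note that this route needs neither a uniqueness statement for inscribed conics nor the (not-yet-proved) perspector--center correspondence.

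The main obstacle I anticipate is bookkeeping rather than depth: pinning down the cyclic order of the six points --- which pairs are opposite, and which segments are sides as opposed to diagonals --- rigorously enough that the ``corner side near $A$'' is genuinely identified with $\sigma_K(BC)$. Once the antipodal pairing of the two points of $BC$ with the two corner vertices near $A$ is established, the half-turn does all the work. A secondary point to state carefully is the classical equality of the three antiparallels through $K$, since it is this equality that places the six endpoints on a single circle centered at $K$ in the first place.
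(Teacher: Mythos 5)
Your proposal is correct and follows essentially the same route as the paper: the tangency to the three corner-cutting sides is obtained from the half-turn about $K$, using that the orthic conic is centered at $K$ (Theorem \ref{thm:ortic}) and that opposite sides of the hexagon are swapped by this symmetry. You are merely more explicit about two points the paper leaves to the surrounding discussion --- the equal, $K$-bisected antiparallels placing the six vertices on the cosine circle, and the combinatorial identification of opposite sides.
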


\begin{proof}
Due to central symmetry, the opposite sides of $[A_1A_2 B_1 B_2 C_1 C_2]$ are parallel and congruent. Since the  side $A_1A_2$ is tangent to the orthic conic, which is concentric with Lemoine circle, so is the side $B_2 C_1$. Hence 
$[A_1 A_2 B_1 B_2 C_1 C_2]$ circumscribes the orthic conic and tangents it at the feet of the altitudes of $\triangle{ABC}.$
\end{proof}

   At this point, we learned  two  facts on orthic conic: 
  \begin{itemize}
      \item it is centered at Lemoine point $K;$ 
      \item $K$ is (also) the complement of the isotomic conjugate of $H.$
      
  \end{itemize}

\begin{figure}\centering
\includegraphics [trim=0 300 0 200,clip,width=0.9\textwidth]{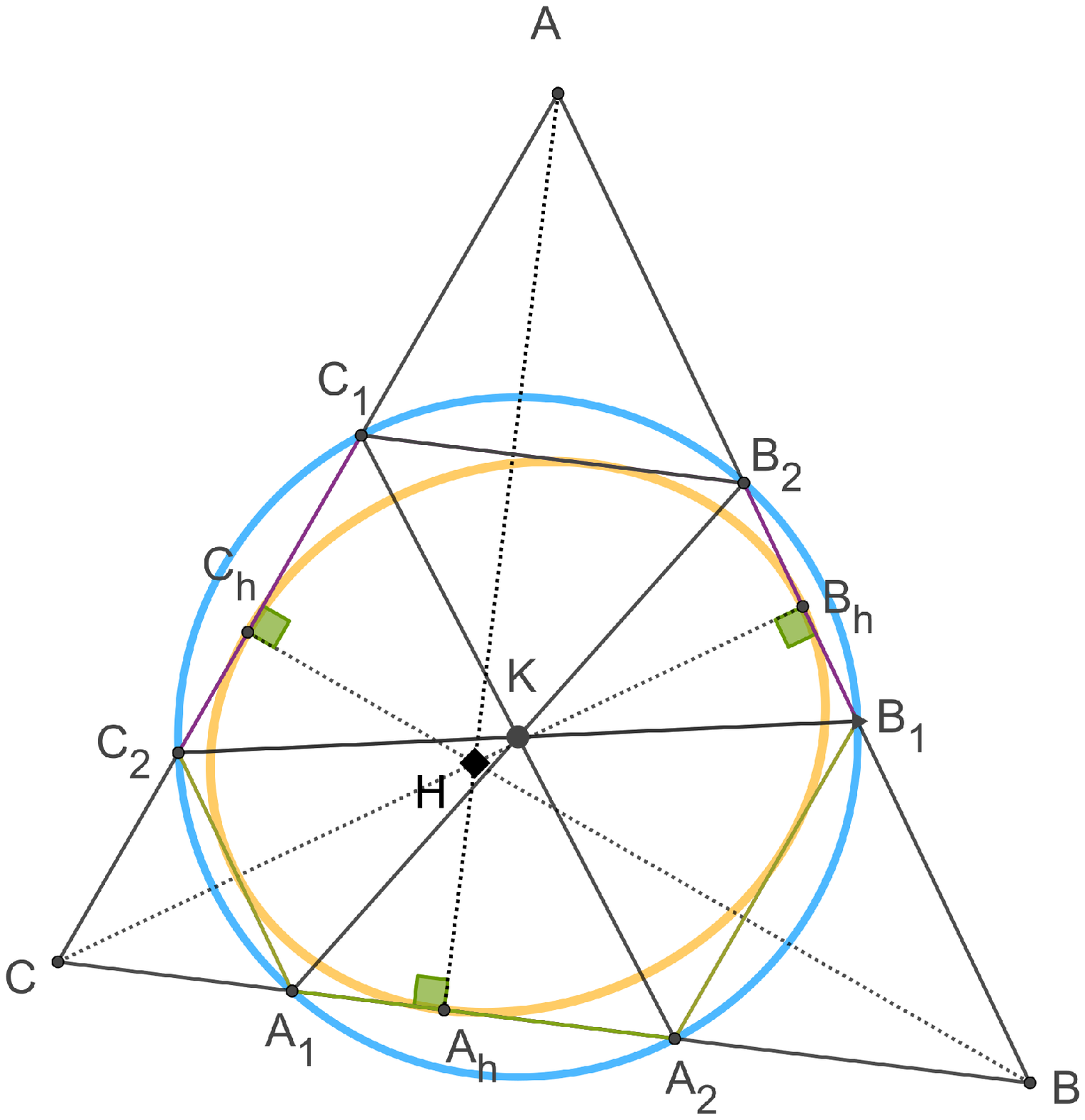}
\caption{I)
Lemoine circle (blue) is centered at the Lemoine point, $K.$ The tree diameters $A_1 B_2, A_2 C_1, B_1 C_2$ are antiparallel with triangle's sides.
II) Hexagon $[A_1 A_2 B_1 B_2 C_1 C_2]$ is inscribed into Lemoine circle and circumscribes the orthic conic of $\triangle{A B C}$. }
  \label{fig:20021_inscript}
\end{figure}

 \begin{figure}
\centering
\includegraphics[trim=0 300 0 300,clip,width=0.9\textwidth]{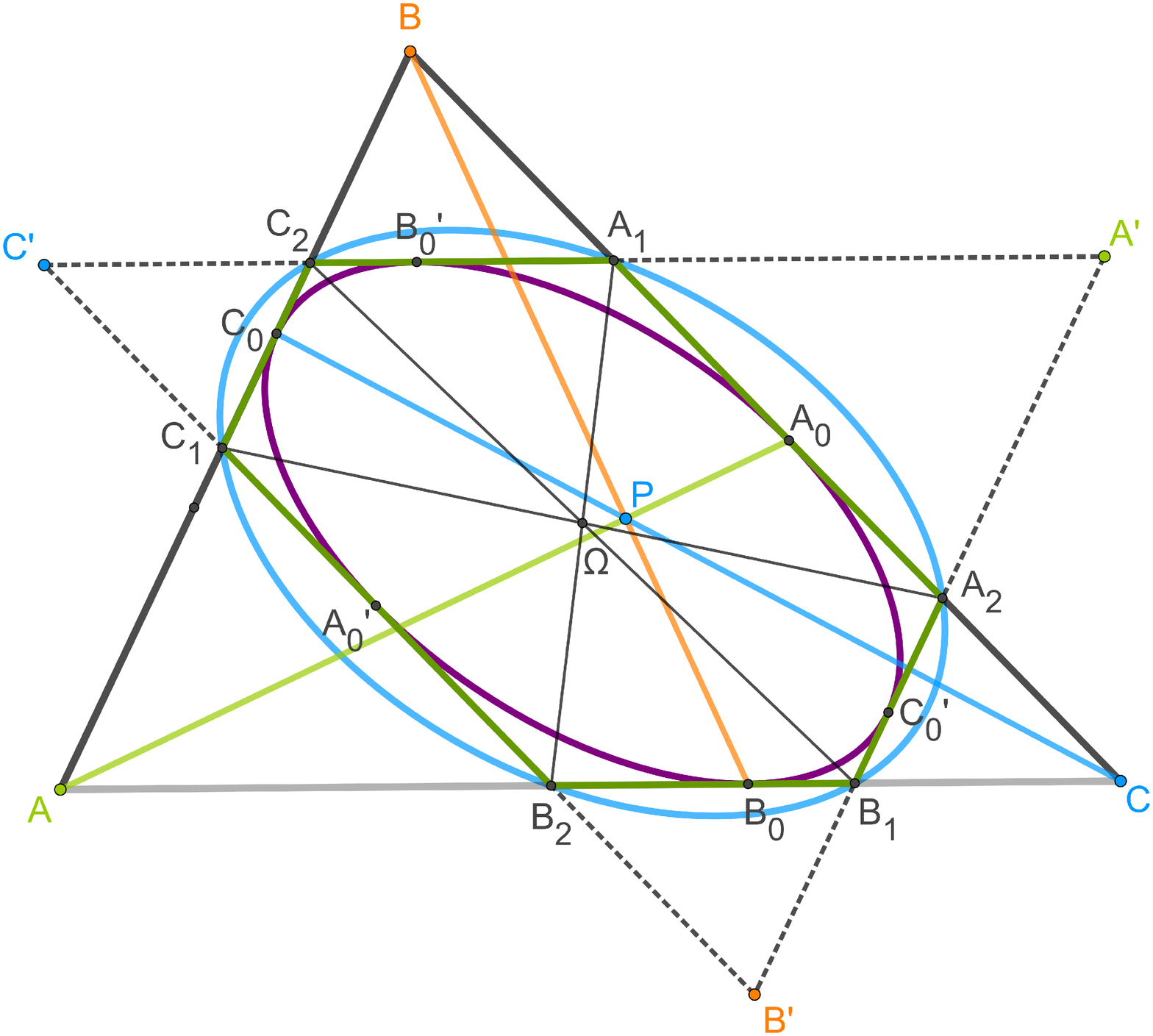}
\caption{$[A_1 A_2 B_1 B_2 C_1 C_2]$, $\triangle{A B C}$,
and $\triangle{A' B' C'}$ share the same inscribed conic, centered at $\Omega$ (purple ellipse). The diagonals meet at the center of the conic  $\Omega$.
By an afine transform, the circumscribed conic of $[A_1 A_2 B_1 B_2 C_1 C_2]$, (blue circumellipse) can be mapped into a circle, hence the point $\Omega, $ is mapped  into Lemoine point.} 
\label{fig:ferramenta_hexagono}
\end{figure}

 \section*{} 
 
  Now, we shall finally prove that (and why) the center of (any)  conic with given perspector is the complement the  isotomic conjugate of the perspector.
  
\begin{lemma} There exists an afine transform that maps an inscribed  ellipse $\gamma$, of perspector $P$,    into the orthic conic.
\end{lemma}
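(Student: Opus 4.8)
The plan is to exploit the fact that every notion entering the statement is affine, so that the lemma reduces to a single numerical realizability problem. Recall first that the orthic conic is precisely the inscribed conic whose perspector is the orthocenter $H$, and that $H$ has barycentric coordinates $(\tan A : \tan B : \tan C)$ with respect to $\triangle ABC$. An affine map $\phi$ carries lines to lines, preserves tangency to a conic and concurrency of cevians, and sends vertices to vertices; hence it sends the inscribed conic of perspector $P$ to the inscribed conic of perspector $\phi(P)$ in the image triangle. Crucially, affine maps preserve barycentric coordinates: if $P=(\alpha:\beta:\gamma)$ in $\triangle ABC$, then $\phi(P)$ has the very same coordinates $(\alpha:\beta:\gamma)$ in $\triangle\,\phi(A)\phi(B)\phi(C)$. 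Finally, affine maps take ellipses to ellipses.

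First I would reduce the lemma to a statement about the image triangle alone. The conic $\phi(\gamma)$ is the orthic conic of $\triangle A''B''C'':=\triangle\,\phi(A)\phi(B)\phi(C)$ exactly when its perspector $\phi(P)$ is the orthocenter $H''$ of that triangle. Since $\gamma$ is an inscribed \emph{ellipse}, its perspector $P$ is interior, so its barycentric coordinates $(p:q:r)$ are all positive. By the invariance of barycentrics noted above, it therefore suffices to produce an acute triangle $A''B''C''$ whose orthocenter has barycentric coordinates $(p:q:r)$, that is, a triangle with
\[
\tan A'' : \tan B'' : \tan C'' = p : q : r .
\]
The desired affine transform is then the unique one sending $A,B,C$ to $A'',B'',C''$ in this order.

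The heart of the argument is this realizability step. I would set $\lambda=\sqrt{(p+q+r)/(pqr)}$ and put $x=\lambda p,\; y=\lambda q,\; z=\lambda r$; a direct check gives $x+y+z=\lambda(p+q+r)=\lambda^{3}pqr=xyz$. I then invoke, in reverse, the identity $\tan\alpha+\tan\beta+\tan\gamma=\tan\alpha\tan\beta\tan\gamma$ valid whenever $\alpha+\beta+\gamma=\pi$: since $x,y,z>0$ satisfy $x+y+z=xyz$, the angles $A''=\arctan x,\;B''=\arctan y,\;C''=\arctan z$ lie in $(0,\pi/2)$ and their sum, having zero tangent and lying in $(0,3\pi/2)$, equals $\pi$. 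Hence they are the angles of a genuine acute triangle, whose orthocenter has exactly the prescribed barycentrics $(p:q:r)$, completing the reduction.

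The main obstacle I anticipate is precisely this realizability: showing that an arbitrary positive ratio $(p:q:r)$ does arise as $(\tan A'':\tan B'':\tan C'')$ of a real triangle, and that this triangle is acute so that its orthic conic is a genuine ellipse matching $\gamma$. Both points are settled by the scaling choice of $\lambda$ together with the tangent identity; the residual verifications (that the angle sum is $\pi$ and not $0$ or $2\pi$, and the treatment of the limiting right-triangle case where a coordinate of $P$ degenerates) are routine. A synthetic variant, closer to the spirit of Figure~\ref{fig:ferramenta_hexagono}, would instead circumscribe $\gamma$ by a hexagon with parallel opposite sides and map its circumconic to a circle, but the crux — realizing the configuration as an orthic one — is the same.
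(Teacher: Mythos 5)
Your proof is correct in substance, but it reaches the conclusion by a genuinely different route than the paper. The paper stays synthetic: it reflects $\triangle ABC$ in the center $\Omega$ of $\gamma$, obtains a circumscribing hexagon with opposite sides parallel, takes that hexagon's circumconic $\Gamma$ (concentric with $\gamma$), and applies the affine map sending $\Gamma$ to a circle; by Lemma~\ref{lemma:ortic_hexagon} that circle is the second Lemoine circle of the image triangle, so the image of $\gamma$ is its orthic conic. This buys coherence with the Lemoine--hexagon machinery of Section~2 (and is exactly the configuration of Figure~\ref{fig:ferramenta_hexagono}), at the cost of needing the auxiliary hexagon and the converse of Pascal. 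You instead reduce everything to a realizability statement via the affine invariance of barycentric coordinates: it suffices to build an acute triangle with $\tan A'':\tan B'':\tan C''=p:q:r$, which your scaling $\lambda=\sqrt{(p+q+r)/(pqr)}$ together with the identity $\tan\alpha+\tan\beta+\tan\gamma=\tan\alpha\tan\beta\tan\gamma$ accomplishes. This is more elementary and self-contained, and it makes explicit that the target triangle can be taken acute (so its orthic conic really is an inscribed ellipse); the trade-offs are that it imports the barycentric formula for $H$ rather than arguing synthetically, and that it relies on the perspector of an inscribed ellipse having all-positive barycentrics --- true for ellipses contained in the triangle, which is the case the paper intends, but excircle-like inscribed ellipses (perspector outside the triangle yet inside the Steiner circumellipse) would require an obtuse target and a separate sign analysis. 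Two minor points to tighten: justify uniqueness of the inscribed conic with prescribed tangency points (so that ``same perspector'' really forces ``same conic'' in the image triangle), and, when concluding $A''+B''+C''=\pi$, argue via $\sin(A''+B''+C'')\propto x+y+z-xyz=0$ rather than via the tangent, to avoid the degenerate case $xy+yz+zx=1$ where $\tan$ of the sum is undefined.
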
  
\begin{proof}   In fact, if $\Omega$ is $\gamma$'s center, then  reflection of $\triangle{ABC}$ in $\Omega$ led to a  $\triangle {A'B'C'}$ which share with the former the same i-conic (see Figure ~\ref{fig:ferramenta_hexagono}). Due to central symmetry, the sides of these two triangles mutually intercept at six points,  vertices of a hexagon with  pairs of opposite parallel and congruent sides. This hexagon also shares with the two reflected triangles  the  i-conic. Since this hexagon have opposite parallel sides, by
 Pascal theorem this hexagon has a  circumellipse, $\Gamma$; the  central symmetry  ensures that $\Gamma$ and   $\gamma$ are concentric.
  
 There exists an afine transform  mapping hexagon's $[A_1 A_2 B_1 B_2 C_1 C_2]$ circumellipse $\Gamma$ into a circle, $\tilde {\Gamma}$ and preserves the  center of the conic. This  maps the perspector $P$ of the conic $\gamma$, into the perspector of the i-conic $\tilde{\gamma}$ of  $\triangle{\tilde A \tilde B \tilde C}$ the image 
 of the referred triangle.
 Thus, 
 by Lemma
~\ref{lemma:ortic_hexagon}, the circle $\tilde{\Gamma}$ is precisely   
 $\triangle{\tilde{A}\tilde{B} \tilde{C}}$'s
 Lemoine circle, centered at its   Lemoine's point $\tilde K.$
 The proof finishes once reminded that the perspector of the i-conic centered at Lemoine point $\tilde{K}$ is the orthocenter.
\end{proof}

 \subsection*{Proof of Theorem 3.}
 \begin{proof}
   Afine transforms preserves colinearity, as well as the proportions between points located on the same line. Therefore, afine transforms preserves the barycenter 
 $G$, isotomic conjugacy and complementarity,  ending the proof.
\end{proof}
As a corollary,  a classic fact on Steiner conic.
\begin{corollary}
There exist one (and only one) inscribed conic whose perspector coincides with its center: this is Steiner conic, centered at $G.$
\end{corollary}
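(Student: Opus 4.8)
The plan is to read this off directly from Theorem~\ref{thm:perspector}. That theorem identifies the center $\Omega$ of an inscribed conic with perspector $P$ as the complement of the isotomic conjugate of $P$. Writing $t$ for isotomic conjugation and $c$ for the complement map (the homothety centered at the barycenter $G$ with ratio $-\tfrac12$, so that $\overrightarrow{G\,c(X)}=-\tfrac12\,\overrightarrow{GX}$), the demand that the perspector coincide with the center is exactly the fixed-point equation $P=c(t(P))$. Since the inscribed conics are parametrized bijectively by their perspectors, the corollary reduces to showing that this equation has the single solution $P=G$.

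For existence I would note that the centroid is fixed by both maps. Indeed $t(G)=G$: since the feet of the medians are the midpoints of the sides, reflecting them in those same midpoints returns the identical feet, so the defining cevians are again the medians and meet at $G$. And $c$ fixes $G$ as the center of its own homothety. Hence $c(t(G))=G$, and the inscribed conic with perspector $G$—the conic tangent to the sides at their midpoints, the Steiner inellipse—is centered at $G$, as claimed.

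For uniqueness the quickest route is a one-line barycentric computation. Put $P=(x:y:z)$ with $xyz\neq 0$, so that $t(P)=(yz:zx:xy)$. Taking the complement, i.e.\ replacing each coordinate by the sum of the other two, yields
\begin{equation}
c(t(P))=\bigl(x(y+z):y(z+x):z(x+y)\bigr).
\label{eq:fixedpoint}
\end{equation}
Imposing $c(t(P))=(x:y:z)$ produces a common ratio $\lambda$ with $y+z=z+x=x+y=\lambda$, and comparing these forces $x=y=z$, that is $P=G$.

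The only genuine obstacle is the uniqueness statement; existence is immediate once the Steiner inellipse is recognized. A purely synthetic argument for uniqueness is less transparent, because $P=c(t(P))$ is equivalent to $t(P)=a(P)$, where $a=c^{-1}$ is the anticomplement (the homothety $h_{G,-2}$), and one would have to rule out by hand any coincidence of the isotomic conjugate with the anticomplement other than at $G$. The identity~\eqref{eq:fixedpoint} settles this instantly, so I would present existence geometrically and uniqueness through this brief coordinate calculation.
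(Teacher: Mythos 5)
Your proposal follows exactly the same route as the paper: both reduce the corollary to the statement that the map $P\mapsto c(t(P))$ (complement of the isotomic conjugate) has the barycenter as its unique fixed point. The paper simply asserts this uniqueness in one line, whereas you actually verify it --- the observation that $t$ and $c$ each fix $G$ gives existence, and your barycentric identity $c(t(P))=\bigl(x(y+z):y(z+x):z(x+y)\bigr)$ correctly forces $x=y=z$ for a fixed point --- so your write-up supplies the justification the paper leaves implicit.
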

\begin{proof}   The map that associates to a point $P,$ the complement of its isotomic conjugate has a unique fixed point: triangle's baricenter.
 \end{proof}
Finally, note that Lemma \ref{lemma:simedian_mid}, \ref{lemma:polo_tang} and  \ref{lema:Isotomico}  led to a straight-forward synthetic proof for  both 
{Theorem 2.1} and  Theorem 2.4, in [MM1], which are the ingredients of  Grindgberg-Yiu theorem  
(Theorem 2.7 in [MM1]).

\section* {Bibliography}
[A] A. V. Akopyan, {\it Conjugation of lines with respect
to a triangle}, Journal of Classical Geometry, Vol.
1, pp. 23-31, 2012.

[AZ] {A. V. Akopyan and A. A. Zaslavsky},
{\it Geometry of Conics},
{Amer. Math. Soc.},
{Providence, RI},
{2007}

[B] Bradley, C. J. {\it Hexagons with Opposite Sides Parallel}, The Mathematical Gazette, {\bf 90}, no. 517, 2006, pp. 57–67.
 
[C] Casey, J.{\it  A treatise on the analytical geometry of the point, line, circle, and conic sections, containing an account of its most recent extensions, with numerous examples}
 Hodges, Figgis, and Co., Longmans, Green, and Co., 
Dublin London,
1885.

 [G] Gallatly, W. {\it The Modern Geometry of the Triangle} Francis Hodgson,
89 Faebingdon Street, E.G, 1910.

[ETC] Kimberling, C.,
    {\it Encyclopedia of Triangle Centers},
url={bit.ly/3mOOver}

[G1] R. Garcia, {\it Elliptic Billiards and Ellipses Associated to the 3-Periodic Orbits},
{Amer. Math. Monthly},{\bf {126}}, {06},
pp 491--504,
     {2019}.

[GRK] {R.Garcia,  D. Reznik and  J. Koiller},
{\it New Properties of Triangular Orbits in Elliptic Billiards},
{Amer. Math. Monthly},
{\bf{128}},{(10)},
{2021},
pp 898--910.
 
[GSO]
{Georg Glaeser and Hellmuth Stachel and Boris Odehnal},
{\it The Universe of Conics: From the ancient Greeks to 21st century developments},
{Springer Spektrum}, Berlin-Heidelberg,
{2016}.

[H] Honsberger, R.  {\it Episodes in Nineteenth and Twentieth Century Euclidean Geometry}  , The Mathematical Association of America, New Library, Washington, 1995. 

[L] Lemoine, E
{ \it Note sur un point remarquable du plan d'un triangle}
Nouvelles annales de mathématiques, Série 2, Tome 12 (1873), pp. 364-366.
 
[M] Mackay, J. S. (1895), {\it Symmedians of a triangle and their concomitant circles}, Proceedings of the Edinburgh Mathematical Society, 14: 37–103, doi:10.1017/S0013091500031758

[MM1] Minevich, I., Morton, P.,
 {\it Synthetic foundations of Cevian Geometry, I: fixed points of affine maps}
Journal of Geometry Vol 108 (2017), pages 437–455.

[MM2] Minevich, I., Morton, P.,
 {\it Synthetic foundations of Cevian Geometry IV:
the TCC-perspector theorem},
International Journal of geometry
Vol. 6 (2017), No. 2, 61-85.

[P] Pamfilos, P. http://users.math.uol.gr/Pamfilos/
eGallery/problems/
Symmedian Vecten.html

[RG] {D. Reznik and R. Garcia},
{Related By Similarity {II}: Poncelet 3-Periodics in the Homothetic Pair and the Brocard Porism},
{Intl. J. of Geom.},
{\bf 10},
{(4)},
{03},
{2021},
pp {18--31}.

\bigskip
 \bigskip
 \bigskip
 
DEPARTAMENTO DE MATEMÁTICA

UNIVERSIDADE FEDERAL DE PERNAMBUCO

RECIFE, (PE) BRASIL

\textit{E-mail address}:

\texttt{liliana@dmat.ufpe.br}
\bigskip
\bigskip
\end{document}